\documentclass[11pt,a4paper]{amsart}
\usepackage[T1]{fontenc}
\usepackage{lmodern}
\usepackage{amsmath,amssymb,mathtools,microtype}
\usepackage[hmargin=27mm,vmargin=25mm]{geometry}
\usepackage[hidelinks]{hyperref}
\newtheorem{theorem}{Theorem}[section]
\newtheorem{lemma}[theorem]{Lemma}
\newtheorem{proposition}[theorem]{Proposition}
\newtheorem{corollary}[theorem]{Corollary}
\theoremstyle{remark}
\newtheorem{remark}[theorem]{Remark}
\newcommand{\R}{\mathbb R}
\newcommand{\E}{\mathcal E}
\newcommand{\one}{\mathbf 1}
\newcommand{\Cor}{\mathcal C}
\newcommand{\dev}{\operatorname{dev}}
\title[Local quadratic convergence of iterated correlation matrices]{Local quadratic convergence of iterated correlation matrices: bounds and counterexamples}
\author{Ishrak Alhajj Hassan}
\address{University of Ostrava, Czech Republic}
\date{October 17, 2026}
\begin{document}
\begin{abstract}
Kruskal's unpublished manuscript on CONCOR gives a local argument for convergence of successive correlations to a prescribed two-class sign pattern. We identify a failure in its stated one-step estimate and establish a replacement bound for correlation matrices in arbitrary dimension. A geometric estimate for sign-aligned centered rows yields an explicit sufficient neighbourhood of convergence and a quadratic error bound. We construct invariant families of correlation matrices with an exact scalar recurrence in every dimension at least three. These families yield counterexamples to the original estimate in every dimension at least four. The failure persists on relatively open sets of positive-definite correlation matrices. In dimensions divisible by four, an explicitly solvable subfamily provides counterexamples arbitrarily close to the prescribed fixed point and attains the asymptotic coefficient in the new bound. Quadratic order is attained by the constructed families in every dimension at least three. These counterexamples concern the quantitative estimate, not the convergence conclusion: all the exhibited trajectories converge to their prescribed sign patterns.
\end{abstract}
\maketitle

\section{Introduction and main results}
Repeatedly replacing a matrix by the correlations between its rows defines the iteration underlying CONCOR. The procedure was developed for relational data by Breiger, Boorman, and Arabie~\cite{BBA}; Chen~\cite{Chen} studied iteratively generated correlation matrices in connection with visualization, clustering, and seriation. A natural candidate for a limit is a matrix
\[
P^*=\sigma\sigma^{\mathsf T},\qquad \sigma\in\{-1,1\}^n,
\]
where both signs occur. Its entries are $1$ within each of two nonempty classes and $-1$ between them.

Schwartz~\cite[pp.~258--266]{Schwartz} analyzed iterated covariations through matrix powers and spectral decomposition. For Pearson correlation iteration, the changing diagonal normalization prevents the same argument from applying directly; his discussion explicitly leaves convergence unproved~\cite[pp.~273--276]{Schwartz}. He also exhibited non-sign-valued fixed points in dimension three~\cite[pp.~280--281]{Schwartz}. The complete fixed-point and stability classification, together with global convergence of every admissible trajectory in dimension three, is established in~\cite{AH}. The present paper addresses the complementary quantitative question of convergence rates near a prescribed sign-pattern fixed point in arbitrary dimension.

In the scanned manuscript \emph{A theorem about CONCOR}, dated September~21, 1977, Kruskal~\cite[pp.~1--3]{Kruskal} stated a sufficient condition for convergence near such a prescribed pattern. Chen~\cite[\S3.2]{Chen} discusses this work and lists it as an unpublished manuscript. In their study of iterative distance-profile clustering, Dubnov et al.~\cite[p.~57]{Dubnov} reproduce Kruskal's sufficient condition for Pearson iteration and cite the 1978 technical report.

Kruskal's argument measures the maximum entrywise deviation from a prescribed sign pattern and seeks a one-step bound quadratic in that deviation~\cite[p.~2]{Kruskal}. His convergence argument then uses a contraction factor below one~\cite[pp.~3, 9--10]{Kruskal}. We retain this framework and correct its quantitative estimate.

We prove three main results. First, a geometric bound for sign-aligned centered rows gives a valid one-step estimate and an explicit sufficient neighbourhood of quadratic convergence in every dimension $n\geq2$. Second, repeated Gram vectors produce invariant families with an exact scalar recurrence in every dimension $n\geq3$. These families disprove Kruskal's estimate in every dimension $n\geq4$, and the failure persists on relatively open sets of positive-definite correlation matrices. Third, the families attain quadratic order in every dimension $n\geq3$. For balanced partitions in dimensions divisible by four, an explicitly solvable subfamily gives counterexamples arbitrarily close to the fixed point and attains the asymptotic coefficient $1/e=1/2$, proving that this coefficient cannot be reduced in those dimensions.

\subsection*{Notation}
Let $\E_n$ denote the set of real symmetric positive semidefinite $n\times n$ matrices with unit diagonal. Their entries belong to $[-1,1]$. Set
\[
H=I_n-\frac1n\one\one^{\mathsf T}.
\]
For the rows $p_i=P(i,:)$, write $x_i=p_iH$. If all $x_i$ are nonzero, define the row-correlation map by
\begin{equation}\label{eq:map}
\Cor(P)_{ij}=\frac{\langle x_i,x_j\rangle}{\|x_i\|\,\|x_j\|}.
\end{equation}
Here $\one\in\R^n$ is the all-ones vector. A matrix is called Pearson-nondegenerate if all its centered rows are nonzero; an admissible initial correlation matrix is an element of $\E_n$ with this property. All vector norms are Euclidean. The matrix $\Cor(P)$ is a Gram matrix of unit vectors, so it belongs to $\E_n$. Kruskal~\cite[p.~2]{Kruskal} formulates his iteration using column correlations and allows general initial matrices with entries in $[-1,1]$. We state the main results for correlation matrices, for which row and column correlations agree.

Following Kruskal's deviation and class-size normalization~\cite[pp.~1--2]{Kruskal}, fix $\sigma\in\{-1,1\}^n$ with both signs present, and put
\begin{equation}\label{eq:parameters}
\dev(P)=\max_{i,j}|P_{ij}-\sigma_i\sigma_j|,
\qquad e=\frac{8m(n-m)}{n^2},
\end{equation}
where $m$ is the smaller sign-class size. Thus $0<e\leq2$. A partition is \emph{balanced} if both classes have equal size; then $e=2$. The pattern is fixed whenever $\dev$ is used. Write $\eta=\dev(P)$ and $\eta^+=\dev(\Cor(P))$ for the deviations before and after one iteration, respectively. Its centered rows are $\sigma_i\sigma^{\mathsf T}H\ne0$, and hence $P^*$ is a fixed point of~\eqref{eq:map}.

\begin{theorem}[Local quadratic convergence]\label{thm:main}
Let $n\geq2$, $P_0\in\E_n$, and let $\sigma$ contain both signs. Define
\begin{equation}\label{eq:gamma}
\gamma(e)=\frac{2(e+1-\sqrt{2e+1})}{e}
=\frac{2e}{e+1+\sqrt{2e+1}}.
\end{equation}
If $\eta_0=\dev(P_0)<\gamma(e)$, all iterates $P_{t+1}=\Cor(P_t)$ are well defined and converge to $P^*$. If $\eta_0>0$, put
\begin{equation}\label{eq:Cq}
C=\frac{1}{e(1-\eta_0/2)^2},\qquad q=C\eta_0<1.
\end{equation}
Then, for all $t\geq0$,
\begin{equation}\label{eq:rate}
\eta_{t+1}\leq C\eta_t^2,
\qquad \eta_t\leq C^{-1}q^{\,2^t},
\qquad \eta_t:=\dev(P_t).
\end{equation}
If no iterate equals $P^*$, then
\begin{equation}\label{eq:asymptotic}
\limsup_{t\to\infty}\frac{\eta_{t+1}}{\eta_t^2}\leq\frac1e.
\end{equation}
\end{theorem}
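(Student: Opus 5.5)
The plan is to reduce everything to a one-step inequality
\[
\eta^+\le \frac{\eta^2}{e(1-\eta/2)^2}\qquad\text{whenever } \eta=\dev(P)<1 \text{ and } P\in\E_n ,
\]
proved geometrically, and then to iterate it. Write $P=P^*+D$ with rows $p_i=\sigma_i\sigma^{\mathsf T}+d_i$ and $|D_{ij}|\le\eta$.

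\emph{Sign alignment.} Since $P_{ij}\in[-1,1]$ and $\sigma_i\sigma_j=\pm1$, the deviation always points inward: $\sigma_i\sigma_jD_{ij}\in[-\eta,0]$. Define $u^{(i)}_j=\sigma_i\sigma_jD_{ij}\in[-\eta,0]$ and $s=H\sigma$. A direct computation gives $\|s\|^2=n-(n-2m)^2/n=4m(n-m)/n=en/2$. The sign-flipped centered rows are
\[
y_i=\sigma_ix_i=s+w_i,\qquad w_i=(u^{(i)}\circ\sigma)H .
\]
Moreover $\Cor(P)_{ij}-\sigma_i\sigma_j=-(1-\cos\angle(y_i,y_j))$, so $\eta^+=\max_{i,j}\bigl(1-\cos\angle(y_i,y_j)\bigr)$.

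\emph{Decomposing $w_i$.} Split $w_i$ into its component along $s$ and its component orthogonal to $s$.
\begin{itemize}
\item For the parallel part, use $\langle w_i,s\rangle=\langle u^{(i)}\circ\sigma,H\sigma\rangle$. Because the entries of $u^{(i)}$ lie in an interval of length $\eta$ with one endpoint at $0$, I expect this to be at least $-\tfrac{\eta}{2}\|s\|^2$. This should follow by optimizing the linear functional over the box $[-\eta,0]^n$. The extreme is presumably a $u$ that is constant on the classes, where the bound reduces to the class-size identity behind $e$.
\item Consequently the component of $y_i$ along $s$ is at least $(1-\eta/2)\|s\|>0$. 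This also shows that every $x_i\ne0$, so $\Cor(P)$ is defined.
\item For the orthogonal part, note that projecting $u^{(i)}\circ\sigma$ orthogonally to both $\one$ and $\sigma$ is the same as projecting $u^{(i)}$ orthogonally to $\one$ and $\sigma$, after a sign flip that preserves norms. Its squared norm is therefore at most $n$ times the variance of the entries of $u^{(i)}$. Since those entries lie in an interval of length $\eta$, this is at most $n\eta^2/4$.
\end{itemize}

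\emph{Bounding the angle.} Let $\theta_i=\angle(y_i,s)$. The two bounds above give
\[
\tan^2\theta_i\le\frac{n\eta^2/4}{(1-\eta/2)^2\,en/2}=\frac{\eta^2}{2e(1-\eta/2)^2}.
\]
Then $1-\cos\angle(y_i,y_j)\le 1-\cos(\theta_i+\theta_j)\le 1-\cos 2\theta\le 2\tan^2\theta$, where $\theta=\max(\theta_i,\theta_j)$. This yields exactly $\eta^+\le\eta^2/(e(1-\eta/2)^2)$. The main obstacle is getting these constants sharp, namely the factor $1/4$ in the orthogonal part and the factor $1-\eta/2$ in the parallel part. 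A crude bound $\|w_i\|\le\eta\sqrt n$ loses a factor $4$. If the variance argument does not directly give $n\eta^2/4$, I will treat the extremal problem over the box $[-\eta,0]^n$ explicitly, splitting coordinates by class.

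\emph{Iteration.} Solving $\eta/(e(1-\eta/2)^2)<1$ gives precisely $\eta<\gamma(e)$; the two closed forms in \eqref{eq:gamma} agree by rationalizing. Also $\gamma(e)\le\gamma(2)=2/3<1$, so the one-step bound applies. Assume $\eta_0<\gamma(e)$ and $\eta_0>0$ (the case $\eta_0=0$ is trivial, since $P^*$ is fixed). Then $\eta_1\le C\eta_0^2=q\eta_0<\eta_0$. By induction $\eta_t$ is nonincreasing, so $C_t:=1/(e(1-\eta_t/2)^2)\le C$, and therefore $\eta_{t+1}\le C\eta_t^2$ for all $t$. Multiplying by $C$ gives $C\eta_{t+1}\le(C\eta_t)^2$, hence $C\eta_t\le q^{2^t}$. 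This proves \eqref{eq:rate}, convergence to $P^*$, and that every iterate is well defined. Finally, $\eta_{t+1}/\eta_t^2\le C_t\to1/e$ because $\eta_t\to0$, which gives \eqref{eq:asymptotic}.
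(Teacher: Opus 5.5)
Your argument has a genuine gap at the parallel estimate $\langle w_i,s\rangle\ge-\tfrac{\eta}{2}\|s\|^2$, which is false. Write $\bar\sigma=\frac1n\sum_\ell\sigma_\ell$. Then $\langle w_i,s\rangle=\sum_\ell u^{(i)}_\ell(1-\sigma_\ell\bar\sigma)$ is a linear functional with positive coefficients summing to $\|s\|^2$. Its minimum over $[-\eta,0]^n$ is therefore $-\eta\|s\|^2$, attained at $u=-\eta\one$. That vector is constant on the classes, as you guessed, but it gives twice the value you expected. Correlation matrices realize this up to the diagonal coordinate. Take $\sigma=(1,1,-1,-1)^{\mathsf T}$ and $P=(1-\eta)\sigma\sigma^{\mathsf T}+\eta I_4\in\E_4$, which has $\dev(P)=\eta$. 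The component of $y_1$ along $s$ is $2-\tfrac32\eta$, below your claimed $(1-\eta/2)\|s\|=2-\eta$. The valid bound $(1-\eta)\|s\|$ still gives well-definedness. Inserted into your $\tan$ argument, however, it yields only $\eta^+\le\eta^2/(e(1-\eta)^2)$, the elementary repair of Remark~\ref{rem:repair}. Its threshold $\alpha(e)$ is strictly smaller than $\gamma(e)$, so you reach neither the hypothesis $\eta_0<\gamma(e)$ nor the constant $C$ of the theorem.

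No sharper parallel estimate can save this route, because the intermediate inequality $\tan^2\theta_i\le\eta^2/(2e(1-\eta/2)^2)$ is itself false. Take $n=10$ with balanced $\sigma$ and $\eta=1/3$. Let $G$ consist of $i$, one further index of its class, and two indices of the other class. Let $M$ be the direct sum of the all-ones matrix on $G$ and the identity off $G$, and set $P_{j\ell}=\sigma_j\sigma_\ell(1-\eta+\eta M_{j\ell})$. Then $P\in\E_{10}$ and $\dev(P)=\eta$, and a direct computation gives $\tan^2\theta_i=1/24$, whereas your bound is $1/25$. The factor $1-\eta/2$ belongs to the centre of the box of admissible rows, not to its worst corner.

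The missing idea is to measure from the midpoint $c=(1-\eta/2)s$. Since $u^{(i)}+\tfrac{\eta}{2}\one\in[-\tfrac{\eta}{2},\tfrac{\eta}{2}]^n$, every $y_i$ lies within $R=\sqrt n\,\eta/2$ of $c$. Hence the distance from $c$ to the line through $y_i$ is at most $R$, which gives $\sin\theta_i\le R/\|c\|$. Then $1-\cos(\theta_i+\theta_j)\le2\sin^2\theta\le2R^2/\|c\|^2=\eta^2/(e(1-\eta/2)^2)$. This is Lemma~\ref{lem:ball} combined with Proposition~\ref{prop:bound}. In the example above $\sin^2\theta_i=1/25$ exactly, so the sine version is sharp there while the tangent version fails. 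Your iteration step is correct as written. Note that $\gamma(2)=3-\sqrt5\approx0.764$, not $2/3$, though this is harmless since only $\gamma(e)<1$ is used.
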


For a balanced partition, the sufficient threshold is
\[
\gamma(2)=3-\sqrt5\approx0.763932.
\]
The theorem is local: the limiting pattern is specified in advance. It does not determine which pattern a general trajectory selects. In dimension three, every admissible trajectory converges to one of seven fixed points~\cite{AH}. That global result concerns all admissible initial states; the present theorem gives a quantitative rate near a prescribed sign-pattern fixed point in arbitrary dimension.

\begin{theorem}[Counterexamples and attained rates]\label{thm:examples}
For every $n\geq4$, there exist $P\in\E_n$ and a sign pattern with two classes of equal size, or sizes differing by one, such that, with $\eta=\dev(P)=1/3$,
\begin{equation}\label{eq:counterclaim}
\dev(\Cor(P))>\frac{\eta^2}{e(1-\eta^2)}.
\end{equation}
The trajectories of these matrices nevertheless converge to the prescribed sign patterns. The failure of the estimate also occurs on nonempty relatively open sets of positive-definite correlation matrices whose trajectories converge to the same patterns. In every dimension divisible by four, counterexamples also exist arbitrarily close to a balanced sign-pattern fixed point, and a convergent family attains the coefficient $1/e=1/2$ in~\eqref{eq:asymptotic}. In every dimension $n\geq3$, a convergent family attains quadratic order.
\end{theorem}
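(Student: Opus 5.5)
The plan is to build every example from Gram matrices of a few unit vectors, each repeated on a block of coordinates. Such a matrix is invariant under the permutations that preserve the blocks. That invariance passes to $\Cor(P)$, because $\Cor$ commutes with simultaneous permutations of rows and columns and $H$ is permutation invariant. So $\Cor$ restricts to a map on the few block parameters.

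\textbf{Step 1: the computational device.} I would write $P=\sum_k\lambda_kv_kv_k^{\mathsf T}$ in the block-invariant spectral basis. Since $\one$ is one of the invariant directions, $H$ acts diagonally in this basis. The Gram matrix of the centered rows is then
\[
PHP=\sum_k\lambda_k^2\,v_kv_k^{\mathsf T}\quad\text{(restricted to $\one^\perp$)}.
\]
Dividing by the square roots of the diagonal entries gives $\Cor(P)$ explicitly.

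In the simplest case there are two blocks matching the sign classes: within-class entries $1-\eta$ and between-class entries $-(1-\eta)$. The spectrum is then carried by $\one$, $\sigma$ and the rest, with eigenvalues $\lambda_\sigma$ on $\sigma$ and $\mu=\eta$ on the rest. After one step the matrix has within-class entries $r$ and between-class entries $-r$, where
\[
\eta^+=1-r=\frac{(n-1)\rho^2}{1+(n-2)\rho^2}\qquad\text{(balanced case)},\qquad \rho=\frac{\mu}{\lambda_\sigma}.
\]
Hence the family is invariant with an exact scalar recurrence. The same computation with $m\neq n-m$ works for every $n\geq3$, with an extra $\one$-component whose square still has to be computed.

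\textbf{Step 2: attained quadratic order.} For $n\geq3$, the recurrence behaves like $\eta^+\sim c_n\eta^2$ with an explicit $c_n>0$. For example, $c_n=(n-1)/n^2$ in the balanced case. Thus $\eta_{t+1}/\eta_t^2\to c_n>0$, which shows that quadratic order is attained. Convergence follows from Theorem~\ref{thm:main} once $\eta_0<\gamma(e)$, which holds for small $\eta_0$. The same applies to $\eta_0=1/3$ whenever $e$ is near $2$: the classes are equal or differ by one and $n\geq4$, so $e\geq 2(1-1/n^2)\geq 15/8$, and $\gamma(15/8)>1/3$.

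\textbf{Step 3: counterexamples.} I expect the two-block family to be too well conditioned, since $c_n<1/2$. For the counterexamples I would therefore refine each class into two sub-blocks. In the balanced case $n=4k$ this gives four groups of size $k$. The within-group entry would be $1$ (repeated vectors), the within-class cross-group entry $1-\eta$, and the between-class entries chosen to make $P$ positive semidefinite and the recurrence one-dimensional, for instance $-(1-\eta)$ or a parameter tied to $\eta$.

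The quotient is then a $4\times4$ matrix with explicit eigenvalues. Applying Step 1 yields $\eta^+$ as a rational function of $\eta$, which I would aim to show satisfies $\eta^+/\eta^2\to1/2=1/e$. That limit gives counterexamples arbitrarily close to $P^*$: the bound $\eta^2/(e(1-\eta^2))$ exceeds $\eta^2/e$ only by $O(\eta^4)$, so the comparison needs the next-order term of $\eta^+$ to have the right sign. It also shows that the coefficient in~\eqref{eq:asymptotic} is attained.

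For general $n\geq4$, with classes of sizes $\lfloor n/2\rfloor,\lceil n/2\rceil$, I would use unequal sub-block sizes and verify~\eqref{eq:counterclaim} at $\eta=1/3$ by evaluating the explicit rational expression. This reduces to a polynomial inequality in $n$, which I would check directly for small $n$ and asymptotically for large $n$.

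\textbf{Step 4: openness.} Both sides of~\eqref{eq:counterclaim} are continuous in $P$ on the Pearson-nondegenerate set, and the inequality is strict. If the example is only semidefinite (because of repeated vectors), I would first move to $(1-\varepsilon)P+\varepsilon I$, which is positive definite and still violates the inequality for small $\varepsilon$. A small relatively open neighbourhood in $\E_n$ then consists of positive-definite matrices violating the estimate. These matrices still satisfy $\dev<\gamma(e)$, so they converge to $P^*$ by Theorem~\ref{thm:main}.

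\textbf{Main obstacle.} The hard step is Step 3: choosing the sub-block parameters so that the family stays positive semidefinite and one-dimensional, while $\eta^+$ exceeds $\eta^2/(e(1-\eta^2))$, both at $\eta=1/3$ for all $n\geq4$ (including odd $n$) and asymptotically with coefficient exactly $1/2$ when $4\mid n$. I would first try the four-group balanced family and compute its recurrence symbolically, then adapt the group sizes to the other residues of $n$ mod $4$.
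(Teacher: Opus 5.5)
There is a genuine gap, and it lies exactly where you place the main obstacle. Step~3 carries the whole counterexample part of the theorem, and it is never carried out. You leave the between-class entries open, and the concrete choice you offer, $-(1-\eta)$ throughout, fails.

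With four groups of size $k$, that choice gives $M\otimes J_k$ with $M=(1-\eta)ss^{\mathsf T}+\eta I_4$ and $s=(1,1,-1,-1)^{\mathsf T}$. At $\eta=1/3$ the first two centered rows of $M$ are $(11,7,-9,-9)/12$ and $(7,11,-9,-9)/12$. Hence $\dev(\Cor(M\otimes J_k))=4/83<1/16$, and for small $\eta$ one only gets $\eta^+\sim\eta^2/4$.

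The missing idea is to take the Gram matrix of groups $u,v,-u,-v$ with $\langle u,v\rangle=a$. Then the $u$-versus-$(-u)$ block is $-1$, not $-a$. Each negative-type row is the exact negative of a positive-type row, also after centering. This holds even when the $v$- and $(-v)$-groups have different sizes. So $\Cor$ reduces to the scalar map $F(a)=La/\sqrt{(A+Ba^2)(B+Aa^2)}$, and this is how the paper reaches odd $n$.

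The identity $(A+Ba^2)(B+Aa^2)-L^2a^2=AB(1-a^2)^2$ then gives $\eta^+=1-(1+25\tau/36)^{-1/2}$ at $a=2/3$, with $\tau=AB/L^2$. The uniform bounds $\tau\geq2/9$ for even $n$ and $\tau\geq35/144$ for odd $n$ settle every $n\geq4$ at once. Checking ``small $n$ directly and large $n$ asymptotically'' is not a proof without effective bounds.

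In the equal-group case $P(a)\otimes J_h$, the rows have mean zero and $\Cor$ is squaring up to normalization, so $\eta^+=\eta^2/(2-2\eta+\eta^2)$. This exceeds $\eta^2/(2(1-\eta^2))$ for $0<\eta<2/3$ and has limiting ratio $1/2$. It supplies both the near-fixed-point counterexamples and the sharp coefficient.

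Step~1 is also incorrect as stated. For balanced $\sigma$ and $P=(1-\eta)\sigma\sigma^{\mathsf T}+\eta I$, every row has mean $\eta/n$. The same constant $\eta^2/n$ is therefore subtracted from both the within-class and the between-class entries of $PHP$. As a result, the new within-class and between-class correlations are not negatives of each other. In your notation, the within-class deviation is $n\rho^2/(1+(n-2)\rho^2)$ and the between-class deviation is $(n-2)\rho^2/(1+(n-2)\rho^2)$, not $(n-1)\rho^2/(1+(n-2)\rho^2)$. So this family is not invariant, and $c_n=(n-1)/n^2$ is wrong.

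In balanced dimensions quadratic order can still be rescued. The first iterate has $\one$ in its kernel, after which the dynamics is $\rho\mapsto\rho^2$ with coefficient $1/n$. For unbalanced $\sigma$, however, $\one$ is not an eigenvector of $P$. The row means, and hence the normalizations, then depend on the class, so no scalar recurrence results. Your Step~2 therefore establishes nothing in odd dimensions, in particular for $n=3$.

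The convergence argument via $\gamma(15/8)>1/3$ and the perturbation argument in Step~4 are sound and match the paper's.
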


Here a trajectory attains quadratic order if its positive deviations satisfy $\eta_{t+1}/\eta_t^2\to K$ for some $K\in(0,\infty)$.

Theorem~\ref{thm:main} is proved in Section~\ref{sec:bound}. The construction and proof of Theorem~\ref{thm:examples} occupy Sections~\ref{sec:families} and~\ref{sec:sharp}.

\section{The original estimate and a four-dimensional counterexample}
The one-step estimate stated in Kruskal's manuscript~\cite[p.~2]{Kruskal}, in the notation~\eqref{eq:parameters}, is
\begin{equation}\label{eq:old}
\dev(\Cor(P))\leq\frac{\eta^2}{e(1-\eta^2)}.
\end{equation}
The calculation in~\cite[pp.~7--8]{Kruskal} involves centered-norm lower bounds containing $(1-\eta)^2$. Replacing that factor by $1-\eta^2$ does not follow from the preceding bounds. More decisively, the following correlation matrix violates~\eqref{eq:old}.

Take $\sigma=(1,1,-1,-1)^{\mathsf T}$ and
\begin{equation}\label{eq:four}
P_0=\begin{pmatrix}
1&2/3&-1&-2/3\\
2/3&1&-2/3&-1\\
-1&-2/3&1&2/3\\
-2/3&-1&2/3&1
\end{pmatrix}.
\end{equation}
Its eigenvalues are $10/3,2/3,0,0$. Its rows have mean zero and squared norm $26/9$, so
\[
\Cor(P_0)=\frac9{26}P_0^2
=\begin{pmatrix}
1&12/13&-1&-12/13\\
12/13&1&-12/13&-1\\
-1&-12/13&1&12/13\\
-12/13&-1&12/13&1
\end{pmatrix}.
\]
Here $e=2$ and $\eta_0=1/3$, whereas
\[
\dev(\Cor(P_0))=\frac1{13}>\frac1{16}
=\frac{\eta_0^2}{e(1-\eta_0^2)}.
\]
This is an exact counterexample. It does not contradict convergence: $\eta_0<\gamma(2)$, so Theorem~\ref{thm:main} applies. Section~\ref{sec:sharp} also gives its trajectory explicitly.

\section{A geometric bound and quadratic convergence}\label{sec:bound}
The interval information near a sign pattern places all sign-aligned centered rows in a common Euclidean ball. Controlling their directions through this ball gives a stronger estimate than combining separate distance and norm bounds.

\begin{lemma}[Directions in a ball]\label{lem:ball}
Let $c\ne0$ and let $u,v\ne0$ satisfy $\|u-c\|\leq R$ and $\|v-c\|\leq R$. Then
\begin{equation}\label{eq:ball}
1-\frac{\langle u,v\rangle}{\|u\|\,\|v\|}
\leq\frac{2R^2}{\|c\|^2}.
\end{equation}
\end{lemma}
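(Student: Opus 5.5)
The plan is to reduce the estimate to an angle bound and compare the angle between $u$ and $v$ with their angles to the centre $c$. First dispose of the trivial regime. The left side of~\eqref{eq:ball} is $1-\cos\theta$, where $\theta\in[0,\pi]$ is the angle between $u$ and $v$. Hence it never exceeds $2$. If $R\geq\|c\|$, the right side is at least $2$ and there is nothing to prove. So assume $R<\|c\|$.

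In that case, write $\alpha=\arcsin(R/\|c\|)\in[0,\pi/2)$. The key step is that the angle $\theta_u$ between $u$ and $c$ satisfies $\theta_u\leq\alpha$, and likewise $\theta_v\leq\alpha$. To see this, first note from
\[
2\langle u,c\rangle=\|u\|^2+\|c\|^2-\|u-c\|^2\geq\|u\|^2+\|c\|^2-R^2>0
\]
that $\theta_u<\pi/2$. Then the distance from $c$ to the line $\R u$ equals $\|c\|\sin\theta_u$. This distance is at most $\|u-c\|\leq R$. So $\sin\theta_u\leq R/\|c\|$, and since $\theta_u<\pi/2$ and $\sin$ is increasing there, $\theta_u\leq\alpha$.

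Next apply the triangle inequality for angles between nonzero vectors, i.e.\ for geodesic distance on the unit sphere: $\theta\leq\theta_u+\theta_v\leq2\alpha<\pi$. Since $\cos$ is decreasing on $[0,\pi]$, this gives
\[
1-\cos\theta\leq1-\cos2\alpha=2\sin^2\alpha=\frac{2R^2}{\|c\|^2},
\]
which is~\eqref{eq:ball}.

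The only point needing care is the sign condition $\theta_u<\pi/2$. Without it, the bound $\sin\theta_u\leq R/\|c\|$ would not pin $\theta_u$ below $\alpha$. This is exactly why the case $R\geq\|c\|$ is split off first. The spherical triangle inequality is standard; if desired, it can be justified by projecting to the plane spanned by the vectors or by the usual Cauchy--Schwarz argument on the components orthogonal to $c$.
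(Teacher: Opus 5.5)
Your proof is correct and follows the paper's route. You use the same split at $R=\|c\|$, and the same bound on the angle between $c$ and each of $u,v$, obtained from the distance from $c$ to the line through the vector together with the half-space condition $\langle u,c\rangle>0$. Your closing step via the spherical triangle inequality and $1-\cos2\alpha=2\sin^2\alpha$ is the angular form of the paper's argument, which splits $u/\|u\|$ and $v/\|v\|$ into components along and orthogonal to $c$ and applies Cauchy--Schwarz to the orthogonal parts; that computation is itself how the triangle inequality is proved.
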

\begin{proof}
If $R\geq\|c\|$, the right side is at least $2$, and the assertion follows from Cauchy--Schwarz. Suppose $R<\|c\|$, and set $z=R/\|c\|<1$ and $w=c/\|c\|$. The ball lies in the half-space $\langle x,c\rangle>0$. The distance from $c$ to the line spanned by $u$ is at most $R$, so, writing $\widehat u=u/\|u\|$, we have
\[
\langle\widehat u,w\rangle\geq\sqrt{1-z^2}.
\]
The same holds for $\widehat v=v/\|v\|$. Decompose $\widehat u=\alpha w+u_\perp$ and $\widehat v=\beta w+v_\perp$. Then $\alpha,\beta\geq\sqrt{1-z^2}$ and $\|u_\perp\|,\|v_\perp\|\leq z$. Consequently
\[
\langle\widehat u,\widehat v\rangle
\geq\alpha\beta-\|u_\perp\|\,\|v_\perp\|
\geq1-2z^2,
\]
which is~\eqref{eq:ball}.
\end{proof}

\begin{proposition}[One-step bound]\label{prop:bound}
Let $P\in\E_n$ and $\eta=\dev(P)<1$. Then $\Cor(P)$ is well defined and
\begin{equation}\label{eq:new}
\dev(\Cor(P))\leq\frac{\eta^2}{e(1-\eta/2)^2}.
\end{equation}
\end{proposition}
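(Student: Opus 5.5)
The plan is to \emph{sign-align} the centered rows and apply Lemma~\ref{lem:ball} with a suitably shrunken centre. Put $y_i=\sigma_i x_i=\sigma_i p_i H$. Then $\Cor(P)_{ij}=\sigma_i\sigma_j\langle y_i,y_j\rangle/(\|y_i\|\,\|y_j\|)$, so
\[
|\Cor(P)_{ij}-\sigma_i\sigma_j|=1-\frac{\langle y_i,y_j\rangle}{\|y_i\|\,\|y_j\|},
\]
and \eqref{eq:new} amounts to a uniform lower bound on the cosines between the $y_i$.

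\emph{Well-definedness.} Row $i$ has $P_{ii}=1$. Both classes are nonempty, so we may pick $j$ with $\sigma_j=-\sigma_i$. For such $j$ we have $|P_{ij}+1|\le\eta<1$, hence $P_{ij}<0<P_{ii}$. Thus $p_i$ is not constant, $x_i=p_iH\neq0$, and $\Cor(P)$ is well defined.

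\emph{The entrywise structure of $\sigma_ip_i$.} Write $\sigma_ip_i=\sigma^{\mathsf T}+d$ with $d_j=\sigma_iP_{ij}-\sigma_j=\sigma_i(P_{ij}-\sigma_i\sigma_j)$. Then $|d_j|\le\eta$. Since $|P_{ij}|\le1$, the sign of $d_j$ is opposite to $\sigma_j$. Hence $d=-\sigma\circ a$ with $a_j\in[0,\eta]$. This one-sidedness is the key point. Centring at the full sign vector would waste a factor, so we use the half-shifted centre
\[
c=(1-\eta/2)\,\sigma^{\mathsf T}H .
\]
Then
\[
y_i-c=\bigl(\sigma\circ(\tfrac{\eta}{2}\one-a)\bigr)^{\mathsf T}H,
\]
and every entry of $\sigma\circ(\tfrac{\eta}{2}\one-a)$ has absolute value at most $\eta/2$. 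The matrix $H$ is an orthogonal projection, so it is nonexpansive. Therefore $\|y_i-c\|\le R:=\tfrac{\eta}{2}\sqrt n$ for every $i$.

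\emph{The norm of the centre.} We have $\|\sigma^{\mathsf T}H\|^2=n-(\one^{\mathsf T}\sigma)^2/n=n-(n-2m)^2/n=4m(n-m)/n=en/2$. Hence $\|c\|^2=(1-\eta/2)^2en/2$, and $c\neq0$ because $\eta<1$.

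\emph{Conclusion.} Lemma~\ref{lem:ball} applies to $u=y_i$, $v=y_j$; note it covers the case $R\ge\|c\|$ as well, so no smallness of $\eta$ relative to $e$ is needed. It gives
\[
1-\frac{\langle y_i,y_j\rangle}{\|y_i\|\,\|y_j\|}\le\frac{2R^2}{\|c\|^2}
=\frac{2\cdot\eta^2n/4}{(1-\eta/2)^2en/2}=\frac{\eta^2}{e(1-\eta/2)^2}.
\]
Taking the maximum over $i,j$ yields \eqref{eq:new}. The only delicate step is identifying the shrunken centre $(1-\eta/2)\sigma^{\mathsf T}H$ from the sign information on $d$. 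The rest is the projection bound and the norm computation.
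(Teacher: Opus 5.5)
Your proposal is correct and follows essentially the same route as the paper. You sign-align the centered rows, put them all in the ball of centre $(1-\eta/2)\sigma^{\mathsf T}H$ and radius $\sqrt n\,\eta/2$ using the one-sided entry intervals, compute $\|\sigma^{\mathsf T}H\|^2=ne/2$, and apply Lemma~\ref{lem:ball}. Your well-definedness argument, using $P_{ii}=1$ together with a negative cross-class entry, is only a minor variant of the paper's observation that each sign-aligned row has coordinates of both signs.
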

\begin{proof}
Write $v_i=\sigma_iP(i,:)$ and $u_i=v_iH$. The definition of $\eta$ and the entry bounds for correlation matrices give
\begin{equation}\label{eq:intervals}
v_{i\ell}\in
\begin{cases}
[1-\eta,1],&\sigma_\ell=1,\\
[-1,-1+\eta],&\sigma_\ell=-1.
\end{cases}
\end{equation}
Since both sign classes occur and $\eta<1$, each $v_i$ has a positive and a negative coordinate. Thus $v_i$ is nonconstant and $u_i\ne0$.

The midpoint of the interval for coordinate $\ell$ is $(1-\eta/2)\sigma_\ell$. Therefore
\[
\|v_i-(1-\eta/2)\sigma^{\mathsf T}\|\leq\frac{\sqrt n\,\eta}{2}.
\]
As $H$ is an orthogonal projection, all $u_i$ lie in the ball with centre and radius
\[
c=(1-\eta/2)\sigma^{\mathsf T}H,
\qquad R=\frac{\sqrt n\,\eta}{2}.
\]
Moreover,
\[
\|\sigma^{\mathsf T}H\|^2
=n-\frac{(n-2m)^2}{n}
=\frac{4m(n-m)}n=\frac{ne}{2}.
\]
Consequently $c\ne0$ and $\|c\|^2=(ne/2)(1-\eta/2)^2$. Since $u_i=\sigma_i x_i$,
\[
|\Cor(P)_{ij}-\sigma_i\sigma_j|
=1-\frac{\langle u_i,u_j\rangle}{\|u_i\|\,\|u_j\|}.
\]
Lemma~\ref{lem:ball} bounds this expression by $2R^2/\|c\|^2$, which is the right side of~\eqref{eq:new}. Taking the maximum proves the result.
\end{proof}

\begin{remark}[The elementary repair]\label{rem:repair}
The separate distance and norm estimates in Kruskal's argument~\cite[pp.~4--8]{Kruskal}, retaining the factor $(1-\eta)^2$, give the weaker but valid bound
\begin{equation}\label{eq:repair}
\dev(\Cor(P))\leq\frac{\eta^2}{e(1-\eta)^2}.
\end{equation}
Indeed,~\eqref{eq:intervals} gives $\|u_i-u_j\|^2\leq n\eta^2$. The variance identity
\[
\|vH\|^2=\frac1n\sum_{\ell<j}(v_\ell-v_j)^2
\]
and the $m(n-m)$ cross-class coordinate pairs give
\[
\|u_i\|^2\geq\frac{4m(n-m)}n(1-\eta)^2.
\]
Now apply
\[
1-\frac{\langle u_i,u_j\rangle}{d_i d_j}
=\frac{\|u_i-u_j\|^2-(d_i-d_j)^2}{2d_i d_j},
\qquad d_i=\|u_i\|.
\]
The common-ball argument improves~\eqref{eq:repair} strictly when $0<\eta<1$.
\end{remark}

\begin{proof}[Proof of Theorem~\ref{thm:main}]
Set
\[
\kappa(\eta)=\frac{\eta}{e(1-\eta/2)^2},\qquad 0\leq\eta<1.
\]
This function is strictly increasing, since
\[
\kappa'(\eta)=\frac{1+\eta/2}{e(1-\eta/2)^3}>0.
\]
Solving $\kappa(\eta)=1$ gives the smaller root $\gamma(e)$ in~\eqref{eq:gamma}. The larger root exceeds $1$, and $0<\gamma(e)\leq\gamma(2)<1$. Hence $\kappa(\eta)<1$ exactly when $\eta<\gamma(e)$ in the stated interval.

If $\eta_0=0$, the trajectory is constant. Otherwise let $q=\kappa(\eta_0)<1$. Proposition~\ref{prop:bound} and induction show that all iterates are defined, belong to $\E_n$, and satisfy
\[
\eta_{t+1}\leq\kappa(\eta_t)\eta_t\leq q\eta_t,
\qquad \eta_t\leq\eta_0.
\]
In particular $\eta_t\to0$. The same proposition gives $\eta_{t+1}\leq C\eta_t^2$ with $C$ as in~\eqref{eq:Cq}. Setting $b_t=C\eta_t$ yields $b_{t+1}\leq b_t^2$, hence $b_t\leq q^{2^t}$. This proves~\eqref{eq:rate}. If $\eta_t>0$ for all $t$, divide~\eqref{eq:new} by $\eta_t^2$ and use $\eta_t\to0$ to obtain~\eqref{eq:asymptotic}.
\end{proof}

\begin{remark}[General bounded initial matrices]
The proof of Proposition~\ref{prop:bound} uses only the entry bounds $[-1,1]$ and proximity to $\sigma\sigma^{\mathsf T}$, not symmetry, positive semidefiniteness, or unit diagonal. Thus Theorem~\ref{thm:main} also holds for a real initial matrix $P_0\in[-1,1]^{n\times n}$ satisfying the same deviation condition, with the row-correlation convention~\eqref{eq:map}: the first iterate is a correlation matrix and the same estimates apply thereafter. Transposition gives the corresponding statement for column correlations.
\end{remark}

\subsection*{Comparison of the sufficient thresholds}
The elementary repair~\eqref{eq:repair} yields
\[
\alpha(e)=\frac{2e+1-\sqrt{4e+1}}{2e},
\]
whereas the original estimate~\eqref{eq:old} would yield Kruskal's stated threshold~\cite[pp.~1, 3]{Kruskal},
\[
\beta(e)=\frac{\sqrt{1+4e^2}-1}{2e}.
\]
For $0<e\leq2$, one has
\begin{equation}\label{eq:comparison}
\alpha(e)<\gamma(e)<\beta(e).
\end{equation}
The first inequality follows by comparing the increasing contraction factors in~\eqref{eq:repair} and~\eqref{eq:new}. For the second, $\beta(e)\leq\beta(2)<4/5$ and
\[
(1-\eta/2)^2<1-\eta^2\qquad(0<\eta<4/5).
\]
Thus $\kappa(\beta(e))>1$, which gives $\gamma(e)<\beta(e)$. In the balanced case,
\[
\alpha(2)=\frac12,\qquad
\gamma(2)=3-\sqrt5,\qquad
\beta(2)=\frac{\sqrt{17}-1}{4}.
\]
The new estimate enlarges the neighbourhood certified by the elementary repair, not the neighbourhood claimed from the invalid estimate. The counterexamples do not establish failure of convergence between $\gamma(e)$ and $\beta(e)$.

\section{Invariant families and counterexamples in every dimension at least four}\label{sec:families}
The even- and odd-dimensional examples can be treated by one construction. When opposite groups have unequal multiplicities, the row means need not vanish; the centering terms must therefore be retained.

\begin{proposition}[A common scalar recurrence]\label{prop:family}
Let $r\geq1$ and $p,q\geq0$ be integers with $p+q\geq1$, and put $n=2r+p+q$. For unit vectors $u,v$ with $\langle u,v\rangle=a\in(0,1)$, let $Q_{r,p,q}(a)$ be the Gram matrix of
\begin{equation}\label{eq:groups}
\underbrace{u,\ldots,u}_{r},\quad
\underbrace{v,\ldots,v}_{p},\quad
\underbrace{-u,\ldots,-u}_{r},\quad
\underbrace{-v,\ldots,-v}_{q}.
\end{equation}
An empty group is omitted. The target pattern assigns $+1$ to the first two groups and $-1$ to the last two. Define
\begin{equation}\label{eq:AB}
A=2r,\qquad B=p+q-\frac{(p-q)^2}{n},\qquad L=A+B.
\end{equation}
Then $A,B>0$, $\dev(Q_{r,p,q}(a))=1-a$, and
\begin{equation}\label{eq:scalar}
\Cor(Q_{r,p,q}(a))=Q_{r,p,q}(F(a)),
\qquad F(a)=\frac{La}{\sqrt{(A+Ba^2)(B+Aa^2)}}.
\end{equation}
Every trajectory with $0<a_0<1$ converges to its target pattern, and
\begin{equation}\label{eq:family-coefficient}
\lim_{t\to\infty}\frac{\eta_{t+1}}{\eta_t^2}
=\frac{2AB}{L^2}>0,
\qquad \eta_t=1-a_t.
\end{equation}
The class sizes are $r+p$ and $r+q$, and the associated parameter satisfies $e=2L/n$.
\end{proposition}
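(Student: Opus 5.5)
The plan is to reduce everything to computations in the two-dimensional span of $u,v$, using the coefficient-vector description of the rows. Write $\varepsilon\in\{\pm1\}^n$ for the target pattern and $g\in\{0,1\}^n$ for the indicator of the $v$-type rows, so that the $i$th generating vector is $\varepsilon_i w_i$ with $w_i\in\{u,v\}$.

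\textbf{Entries and deviation.} The row of $Q=Q_{r,p,q}(a)$ indexed by a $u$-type vector is $\varepsilon_i\,\alpha^{\mathsf T}$, where $\alpha_\ell=\varepsilon_\ell$ on $u$-positions and $\alpha_\ell=a\varepsilon_\ell$ on $v$-positions. Similarly, a $v$-type row is $\varepsilon_i\,\beta^{\mathsf T}$, with $\beta_\ell=a\varepsilon_\ell$ on $u$-positions and $\beta_\ell=\varepsilon_\ell$ on $v$-positions. Every entry of $Q$ is $\pm1$ or $\pm a$, with the sign given by $\varepsilon_i\varepsilon_\ell$. Since $p+q\ge1$ and $r\ge1$, the entry $a$ actually occurs, so $\dev(Q)=1-a$. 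The class sizes are $r+p$ and $r+q$ by inspection.

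\textbf{Gram structure of the centered rows.} Set
\[
s=\varepsilon\odot(\one-g),\qquad t=\varepsilon\odot g .
\]
Then $\alpha=s+a t$ and $\beta=a s+t$. Because the $u$-positions carry $r$ signs of each kind, $\langle s,\one\rangle=0$, so $sH=s$. We also have $\langle t,\one\rangle=p-q$, and $s\perp t$ because their supports are disjoint. Hence
\[
\|s\|^2=2r=A,\qquad \|tH\|^2=p+q-\frac{(p-q)^2}{n}=B,\qquad \langle s,tH\rangle=\langle s,t\rangle=0 .
\]
In particular $A>0$. For $B>0$, note that $tH=0$ would force $t$ to be constant; since $t$ vanishes on the $u$-positions, this would give $t=0$, contradicting $p+q\ge1$. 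Thus $B>0$.

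\textbf{Correlations after one step.} The centered rows are $\varepsilon_i(s+a\,tH)$ for $u$-type rows and $\varepsilon_i(a s+tH)$ for $v$-type rows. Two centered rows of the same type have correlation exactly $\varepsilon_i\varepsilon_j$. For rows of different types, orthogonality of $s$ and $tH$ gives
\[
\varepsilon_i\varepsilon_j\,\frac{aA+aB}{\sqrt{(A+a^2B)(a^2A+B)}}=\varepsilon_i\varepsilon_j F(a).
\]
Equivalently, the new rows are Gram vectors of $\pm u',\pm v'$ with $\langle u',v'\rangle=F(a)$, realized as the normalized centered rows themselves. This shows $\Cor(Q_{r,p,q}(a))=Q_{r,p,q}(F(a))$, provided $F(a)\in(0,1)$.

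\textbf{Properties of $F$.} Clearly $F(a)>0$. For $F(a)<1$ I would expand
\[
(A+Ba^2)(B+Aa^2)-L^2a^2=AB(1-a^2)^2>0 .
\]
Since $A,B>0$ and $0<a<1$, the cross terms combine exactly into $AB(1-a^2)^2$, so $F(a)<1$ on $(0,1)$.

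\textbf{Convergence and the coefficient.} Write $\eta=1-a$ and use
\[
1-F=\frac{(A+Ba^2)(B+Aa^2)-L^2a^2}{\sqrt{\cdot}\,\bigl(\sqrt{\cdot}+La\bigr)}
=\frac{AB(1-a^2)^2}{\sqrt{\cdot}\,\bigl(\sqrt{\cdot}+La\bigr)} ,
\]
where $\sqrt{\cdot}=\sqrt{(A+Ba^2)(B+Aa^2)}$. Both factors $\sqrt{\cdot}$ and $\sqrt{\cdot}+La$ are at least $La$. Hence
\[
1-F\le \frac{AB(1-a^2)^2}{L^2a^2}.
\]
This alone does not give monotone convergence, so I would prove $F(a)>a$ directly. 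The inequality is equivalent to
\[
L^2>(A+Ba^2)(B+Aa^2)=L^2a^2+AB(1-a^2)^2 ,
\]
that is, $L^2>AB(1-a^2)$ after dividing by $1-a^2$. This holds because $L^2\ge4AB>AB(1-a^2)$. So $a_t$ is increasing and bounded by $1$. Its limit is a fixed point of $F$ in $(0,1]$; on $(0,1)$ we have $F(a)>a$, so the limit is $1$. Therefore $\eta_t\to0$, and by the first step the trajectory converges to the target pattern.

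Letting $a\to1$, $(1-a^2)^2\sim4\eta^2$ and the denominator tends to $L\cdot2L$. Hence
\[
\frac{\eta_{t+1}}{\eta_t^2}\to\frac{4AB}{2L^2}=\frac{2AB}{L^2} ,
\]
which is positive.

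\textbf{Identity $e=2L/n$.} With $m=r+\min(p,q)$ we have $m(n-m)=(r+p)(r+q)$. I would then verify
\[
n\bigl(2r+p+q\bigr)-(p-q)^2=4(r+p)(r+q),
\]
using $n=2r+p+q$, so that $nL=4(r+p)(r+q)$ and $e=8m(n-m)/n^2=2L/n$. The only points needing real care are the positivity of $B$ and the monotonicity $F(a)>a$; the remaining steps are routine algebra.
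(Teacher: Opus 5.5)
Your proof is correct and follows essentially the paper's route: compute the centered row norms $A+Ba^2$, $B+Aa^2$ and the cross inner product $La$, use $(A+Ba^2)(B+Aa^2)-L^2a^2=AB(1-a^2)^2$ to get $F(a)<1$, show $F(a)>a$ to get monotone convergence to $1$, and rationalize $1-F$ to obtain the coefficient $2AB/L^2$. Your orthogonal splitting of the rows into $s$ and $tH$ is a tidier bookkeeping of the paper's direct mean subtraction. Your arguments for $B>0$ (via $tH\neq0$) and for $F(a)>a$ (via $L^2\geq4AB$) are valid substitutes for the paper's bound $d^2\leq(p+q)^2<n(p+q)$ and its observation that each of $A+Ba^2$ and $B+Aa^2$ is less than $L$.
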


\begin{proof}
The Gram construction gives positive semidefiniteness and unit diagonal. Since both $u$ and a vector of type $v$ or $-v$ occur, the nonzero deviations from the target equal $1-a$. Put $d=p-q$. A row of type $u$ has entries $1,a,-1,-a$ in the four groups, respectively; a row of type $v$ has entries $a,1,-a,-1$. Their means are $da/n$ and $d/n$, respectively. Subtracting the mean terms from their squared norms and inner product gives
\begin{align*}
\|x_u\|^2&=2r+(p+q)a^2-\frac{d^2a^2}{n}=A+Ba^2,\\
\|x_v\|^2&=2ra^2+p+q-\frac{d^2}{n}=B+Aa^2,\\
\langle x_u,x_v\rangle&=na-\frac{d^2a}{n}=La.
\end{align*}
Rows of negative type are the negatives of their positive counterparts, including after centering. Here $A>0$, and $B>0$ because $d^2\leq(p+q)^2<n(p+q)$. Thus the update is well defined and has exactly the form~\eqref{eq:scalar}, including when one of the $v$-groups is absent.

For $0<a<1$, each of $A+Ba^2$ and $B+Aa^2$ is less than $L$, so $F(a)>a$. Also,
\begin{equation}\label{eq:detidentity}
(A+Ba^2)(B+Aa^2)-L^2a^2=AB(1-a^2)^2>0,
\end{equation}
which gives $F(a)<1$. The parameters therefore increase to a limit in $(0,1]$. Continuity and~\eqref{eq:detidentity} show that the only possible limit is $1$.

Rationalizing $1-F(a)$ using~\eqref{eq:detidentity} gives
\begin{equation}\label{eq:ratio}
\frac{1-F(a)}{(1-a)^2}
=\frac{AB(1+a)^2}
{(A+Ba^2)(B+Aa^2)(1+F(a))}.
\end{equation}
Letting $a\to1$ proves~\eqref{eq:family-coefficient}. Finally, the difference of class sizes is $d$, so $8(r+p)(r+q)/n^2=2(1-d^2/n^2)=2L/n$.
\end{proof}

\begin{corollary}[All dimensions at least three]\label{cor:dimensions}
The family in Proposition~\ref{prop:family} exists in every dimension $n\geq3$. In even dimension $n=2k\geq4$, choose integers $r,s\geq1$ with $r+s=k$ and set $p=q=s$; then $A=2r$, $B=2s$, and $e=2$. In odd dimension $n=2k+1\geq3$, choose integers $r\geq1$, $s\geq0$ with $r+s=k$ and set $p=s+1$, $q=s$; then
\[
A=2r,\qquad B=2s+1-\frac1n,\qquad
e=2\left(1-\frac1{n^2}\right).
\]
All these trajectories converge, and their quadratic asymptotic coefficients are positive.
\end{corollary}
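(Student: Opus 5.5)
The corollary follows directly from Proposition~\ref{prop:family} once the parameters are substituted. The existence, convergence and positive-coefficient conclusions all come from that proposition. So the only work is to check that the chosen integers satisfy its hypotheses in each dimension and to evaluate $A$, $B$ and $e$.

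\emph{Hypotheses.} Proposition~\ref{prop:family} requires integers $r\geq1$ and $p,q\geq0$ with $p+q\geq1$, and $n=2r+p+q$.

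In even dimension $n=2k\geq4$ we have $k\geq2$, so integers $r,s\geq1$ with $r+s=k$ exist; for instance $r=1$, $s=k-1$. Setting $p=q=s$ gives $p+q=2s\geq1$ and $2r+2s=2k=n$.

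In odd dimension $n=2k+1\geq3$ we have $k\geq1$, so we may take $r=k$, $s=0$, or any split with $r\geq1$, $s\geq0$. Setting $p=s+1$, $q=s$ gives $p+q=2s+1\geq1$ and $2r+2s+1=n$.

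\emph{Parameters.} We substitute into \eqref{eq:AB}; in both cases $A=2r$.

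In the even case $d=p-q=0$, so $B=2s$ and $L=2r+2s=n$. Hence $e=2L/n=2$. This also agrees with the class sizes $r+s=k$ being equal.

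In the odd case $d=1$, so $B=2s+1-1/n$ and $L=2r+2s+1-1/n=n-1/n$. Hence $e=2L/n=2(1-1/n^2)$. This matches $8m(n-m)/n^2$ with $m=k$, since $8k(k+1)=2(n^2-1)$.

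\emph{Conclusion.} Convergence of each trajectory with $0<a_0<1$ is part of Proposition~\ref{prop:family}. So is the limit $2AB/L^2$, which is positive because $A,B>0$ by that proposition; in the odd case with $s=0$ this uses $B=1-1/n>0$.

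There is no real obstacle here. The only point needing care is the smallest cases. For $n=3$ one must allow $s=0$, which gives the groups $u$, $v$, $-u$ with the $-v$ group empty; this is permitted by the convention that an empty group is omitted. For $n=4$ one must have $r=s=1$.
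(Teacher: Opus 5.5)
Your proposal is correct and takes essentially the paper's approach. The paper gives no separate proof and treats the corollary as an immediate substitution into Proposition~\ref{prop:family}. You check its hypotheses, compute $A$, $B$, $L$ and $e=2L/n$, and read off convergence and the positive coefficient $2AB/L^2$ from that proposition.
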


\begin{proof}[Proof of the counterexample assertion in Theorem~\ref{thm:examples}]
Set $a=2/3$ in Proposition~\ref{prop:family} and write
\[
\tau=\frac{AB}{L^2}.
\]
Equation~\eqref{eq:detidentity} gives
\begin{equation}\label{eq:actual-error}
\eta=\frac13,\qquad
\eta^+=1-F(2/3)
=1-\left(1+\frac{25\tau}{36}\right)^{-1/2}.
\end{equation}
The bound proposed in~\eqref{eq:old} is $1/(8e)$.

If $n=2k\geq4$, take $r=\lfloor k/2\rfloor$, $s=\lceil k/2\rceil$, and $p=q=s$. Then
\[
e=2,\qquad \tau=\frac{rs}{(r+s)^2}\geq\frac29.
\]
For even $k$ the ratio is $1/4$; for odd $k\geq3$ it is $(k^2-1)/(4k^2)\geq2/9$. Hence
\begin{equation}\label{eq:even-counter}
\eta^+\geq1-\frac{18}{\sqrt{374}}>\frac1{16}=\frac1{8e}.
\end{equation}
The strict inequality follows by squaring $288<15\sqrt{374}$.

If $n\geq5$ is odd, write $n=4\ell+1$ or $n=4\ell+3$, where $\ell\geq1$. Choose $(r,s)=(\ell,\ell)$ in the first case and $(r,s)=(\ell+1,\ell)$ in the second, and put $p=s+1$, $q=s$. Direct substitution into~\eqref{eq:AB} gives, in both cases,
\[
\tau=\frac14-\frac{1}{16(2\ell+1)^2}\geq\frac{35}{144}.
\]
Also $e=2(1-n^{-2})\geq48/25$. Therefore
\begin{equation}\label{eq:odd-counter}
\eta^+\geq1-\frac{72}{\sqrt{6059}}
>\frac{25}{384}\geq\frac1{8e}.
\end{equation}
For the strict inequality, $359^2\cdot6059-27648^2=16478075>0$. Thus~\eqref{eq:counterclaim} holds in every dimension $n\geq4$. Convergence of all the constructed trajectories follows from Proposition~\ref{prop:family}.
\end{proof}

\begin{remark}
The construction also gives a convergent family in dimension three, with $r=1$, $p=1$, $q=0$, $A=2$, and $B=2/3$. Its asymptotic coefficient is $3/8$. This establishes attainment of quadratic order in dimension three; it does not establish failure of~\eqref{eq:old} there.
\end{remark}

\begin{corollary}[Positive-definite counterexamples]\label{cor:pd}
For every $n\geq4$, there are a fixed sign pattern and a nonempty relatively open subset of the positive-definite correlation matrices on which the original estimate~\eqref{eq:old} fails. Every trajectory starting in this subset converges to that pattern.
\end{corollary}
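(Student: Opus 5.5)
Starting from the counterexamples already constructed, I will perturb them to positive-definite matrices, using continuity of $\Cor$ together with the open threshold condition of Theorem~\ref{thm:main}. Fix $n\geq4$ and let $P=Q_{r,p,q}(2/3)$ be the matrix from the proof of the counterexample assertion, with the pattern $\sigma$ and the parameter $e$ chosen there. Then $\eta=\dev(P)=1/3$ and
\[
\dev(\Cor(P))>\frac{\eta^2}{e(1-\eta^2)},
\]
with a strict inequality whose margin is explicit, by~\eqref{eq:even-counter} and~\eqref{eq:odd-counter}. The matrix $P$ has rank two, so it is not positive definite. It does, however, lie in the closure of the positive-definite correlation matrices. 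Indeed, $P_\varepsilon=(1-\varepsilon)P+\varepsilon I_n$ is a positive-definite correlation matrix for each $\varepsilon\in(0,1]$, and $P_\varepsilon\to P$ as $\varepsilon\to0$.

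Define the defect
\[
\Phi(X)=\dev(\Cor(X))-\frac{\dev(X)^2}{e(1-\dev(X)^2)}
\]
on the set of Pearson-nondegenerate matrices $X\in\E_n$ with $\dev(X)<1$. The function $\dev$ is continuous, being a maximum of finitely many continuous functions. The map $\Cor$ is continuous wherever all centered rows are nonzero, and Proposition~\ref{prop:bound} shows this holds throughout the region $\dev(X)<1$. Hence $\Phi$ is continuous on the relatively open set $\{X\in\E_n:\dev(X)<1\}$, and $\Phi(P)>0$. By continuity there is a relative neighbourhood $N$ of $P$ in $\E_n$ on which $\Phi>0$ and $\dev(X)<\gamma(e)$. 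The second condition can be imposed because $\dev(P)=1/3<\gamma(e)$: since $e\geq48/25$ and $\gamma$ is increasing in $e$, we have $\gamma(e)\geq\gamma(48/25)>1/3$, which is a direct numerical check.

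Let $U=N\cap\{X\in\E_n: X\succ0\}$. The positive-definite correlation matrices form a relatively open subset of $\E_n$, because the condition $\lambda_{\min}>0$ is open. So $U$ is relatively open in the set of positive-definite correlation matrices, and it contains $P_\varepsilon$ for all small $\varepsilon$, so it is nonempty. On $U$ the estimate~\eqref{eq:old} fails because $\Phi>0$. Every trajectory starting in $U$ converges to $\sigma\sigma^{\mathsf T}$ by Theorem~\ref{thm:main}, since $\dev<\gamma(e)$ there.

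The main obstacle is modest and consists of two checks. The first is verifying the inequality $\gamma(e)>1/3$ uniformly over the admissible values of $e$, which reduces to the monotonicity of $\gamma$ and one evaluation. The second is confirming that $\Cor$ is continuous on the relevant set. Pearson-nondegeneracy on the whole region $\dev<1$ is already supplied by the proof of Proposition~\ref{prop:bound}, so continuity follows, and no separate argument is needed there.
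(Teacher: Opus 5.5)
Your proposal is correct and follows essentially the paper's argument: continuity of the defect near the rank-two counterexample $P$, approximation by $P_\varepsilon=(1-\varepsilon)P+\varepsilon I_n$, and Theorem~\ref{thm:main} for convergence. The differences are cosmetic. You take a neighbourhood of $P$ in $\E_n$ and intersect it with the positive-definite matrices, where the paper takes a neighbourhood of $P_\varepsilon$ in the affine space of unit-diagonal symmetric matrices. Your threshold check via monotonicity of $\gamma$ and $\gamma(48/25)=3/4$ replaces the paper's $\kappa(1/3)=12/(25e)\le 1/4$.
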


\begin{proof}
Fix a matrix $P$ constructed in the proof of~\eqref{eq:counterclaim}, together with its pattern and parameter $e$. Then $\dev(P)=1/3$ and $e\geq48/25$. In particular,
\[
\kappa(1/3)=\frac{12}{25e}\leq\frac14<1,
\]
so $\dev(P)<\gamma(e)$. Define
\[
P_\varepsilon=(1-\varepsilon)P+\varepsilon I_n,
\qquad 0<\varepsilon<1.
\]
This matrix has unit diagonal and is positive definite, since
\[
z^{\mathsf T}P_\varepsilon z
\geq\varepsilon\|z\|^2>0\qquad(z\ne0).
\]
The centered row norms of $P$ are nonzero. Hence the correlation map, and therefore the function
\[
G(Q)=\dev(\Cor(Q))-
\frac{\dev(Q)^2}{e(1-\dev(Q)^2)},
\]
are continuous in a neighbourhood of $P$ where $\dev(Q)<1$. The constructed counterexample satisfies $G(P)>0$. As $P_\varepsilon\to P$, both $G(P_\varepsilon)>0$ and $\dev(P_\varepsilon)<\gamma(e)$ hold for all sufficiently small positive $\varepsilon$. Positive definiteness and these strict inequalities persist on a neighbourhood of $P_\varepsilon$ relative to the affine space of real symmetric matrices with unit diagonal. This gives the asserted relatively open set. Theorem~\ref{thm:main} proves convergence for every matrix in it.
\end{proof}

For an explicit positive-definite example, let $P_0$ be~\eqref{eq:four} and set
\[
\widetilde P=\frac{99}{100}P_0+\frac1{100}I_4.
\]
All its eigenvalues are at least $1/100$. Its centered rows have the common squared norm $114051/40000$, and direct calculation gives
\[
\dev(\widetilde P)=\frac{17}{50},\qquad
\dev(\Cor(\widetilde P))=\frac{8980}{114051}
>\frac{289}{4422}
=\frac{(17/50)^2}{2(1-(17/50)^2)}.
\]
Thus the failure of the original estimate is not confined to singular matrices or to matrices with repeated rows.

\section{Exact trajectories and sharpness}\label{sec:sharp}
When all four groups have the same size, the scalar recurrence can be solved explicitly. Let
\begin{equation}\label{eq:Pa}
P(a)=\begin{pmatrix}
1&a&-1&-a\\
a&1&-a&-1\\
-1&-a&1&a\\
-a&-1&a&1
\end{pmatrix},\qquad 0<a<1,
\end{equation}
and let $J_h$ be the $h\times h$ all-ones matrix. Taking $h$ copies of each of $u$, $v$, $-u$, and $-v$ gives four groups of equal size and hence dimension $n=4h$. This requirement is stronger than a balanced sign partition, which requires only that the two sign classes have equal size. The equal-group construction is
\[
Q_h(a)=P(a)\otimes J_h\in\E_{4h}.
\]

For this family, the rows have zero mean and a common norm, so Pearson iteration reduces to matrix squaring followed by a scalar normalization. The spectral squaring mechanism is classical; see Schwartz~\cite[pp.~260--266]{Schwartz}. The two nonzero eigenvalues of $Q_h(a)$ are $2h(1+a)$ and $2h(1-a)$, and their ratio $(1-a)/(1+a)$ is squared at each step. We use this reduction to obtain exact deviation formulas, violations of Kruskal's estimate, and attainment of the asymptotic coefficient.

\begin{proposition}[An explicitly solvable subfamily]\label{prop:exact}
For $h\geq1$,
\begin{equation}\label{eq:equal-map}
\Cor(Q_h(a))=Q_h\!\left(\frac{2a}{1+a^2}\right).
\end{equation}
If $z_0=(1-a_0)/(1+a_0)$, then its trajectory is given by
\begin{equation}\label{eq:exact}
a_t=\frac{1-z_0^{\,2^t}}{1+z_0^{\,2^t}},
\qquad \eta_t=\frac{2z_0^{\,2^t}}{1+z_0^{\,2^t}}.
\end{equation}
Its deviation satisfies
\begin{equation}\label{eq:exact-error}
\eta_{t+1}=\frac{\eta_t^2}{2-2\eta_t+\eta_t^2},
\qquad
\lim_{t\to\infty}\frac{\eta_{t+1}}{\eta_t^2}=\frac12.
\end{equation}
The original estimate~\eqref{eq:old} fails for every $1/3<a<1$ in this family.
\end{proposition}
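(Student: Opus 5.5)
The plan is to obtain \eqref{eq:equal-map} as the special case $r=p=q=h$ of Proposition~\ref{prop:family}, and then solve the recurrence explicitly. For this choice, $Q_{h,h,h}(a)$ coincides with $Q_h(a)=P(a)\otimes J_h$ after reordering groups: the order $u,v,-u,-v$ matches \eqref{eq:groups}. The parameters are $A=2h$, $B=2h$ and $L=4h$, so
\[
F(a)=\frac{4ha}{\sqrt{(2h+2ha^2)^2}}=\frac{2a}{1+a^2}.
\]
This proves \eqref{eq:equal-map}. Alternatively, one checks directly that the rows have zero mean and common squared norm $2h(1+a^2)$, so $\Cor(Q_h(a))=Q_h(a)^2/(2h(1+a^2))$. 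The square is then computed from $P(a)^2=2(1+a^2)P(2a/(1+a^2))$ and $J_h^2=hJ_h$.

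For the trajectory, I will use the substitution $z=(1-a)/(1+a)$, equivalently $a=(1-z)/(1+z)$. The next parameter $a'=2a/(1+a^2)$ satisfies
\[
\frac{1-a'}{1+a'}=\frac{(1-a)^2}{(1+a)^2}=z^2 .
\]
Induction then gives $z_t=z_0^{2^t}$, which yields the formula for $a_t$ in \eqref{eq:exact}. The deviation is $\eta_t=1-a_t=2z_t/(1+z_t)$; this uses $\dev(Q_h(a))=1-a$ from Proposition~\ref{prop:family}.

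For \eqref{eq:exact-error}, I compute directly:
\[
\eta_{t+1}=1-\frac{2a_t}{1+a_t^2}=\frac{(1-a_t)^2}{1+a_t^2}.
\]
Substituting $a_t=1-\eta_t$ gives $1+a_t^2=2-2\eta_t+\eta_t^2$. Since $0<a_0<1$ forces $z_0\in(0,1)$, we get $\eta_t\to0$, and the ratio $\eta_{t+1}/\eta_t^2$ tends to $1/2$.

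For the failure of \eqref{eq:old}, take $\eta=1-a\in(0,2/3)$ and $e=2$ (the partition is balanced). The actual deviation is $\eta^2/(2-2\eta+\eta^2)$, and the claimed bound is $\eta^2/(2(1-\eta^2))$. Strict violation is therefore equivalent to $2-2\eta+\eta^2<2-2\eta^2$, that is, $3\eta^2<2\eta$, that is, $\eta<2/3$. This is exactly $a>1/3$.

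The only step needing care is this last equivalence together with the identification of $Q_h$ with the family of Proposition~\ref{prop:family}. Both are routine, so I expect no real obstacle. The main point of attention is confirming that the target pattern, namely $+$ on the $u,v$ groups, is the balanced one with $e=2$.
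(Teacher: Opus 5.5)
Your proposal is correct and follows essentially the same route as the paper. You specialize Proposition~\ref{prop:family} to $r=p=q=h$ (so $A=B=2h$), square the recurrence via $z=(1-a)/(1+a)$, substitute $a_t=1-\eta_t$, and with $e=2$ reduce the violation of \eqref{eq:old} to $\eta(2-3\eta)>0$ for $0<\eta<2/3$; your alternative direct check via $P(a)^2=2(1+a^2)P(2a/(1+a^2))$ and $J_h^2=hJ_h$ is also valid.
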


\begin{proof}
Take $r=p=q=h$ in Proposition~\ref{prop:family}; then $A=B=2h$, giving~\eqref{eq:equal-map}. The transformation $z=(1-a)/(1+a)$ satisfies $z_{t+1}=z_t^2$. Solving this recurrence and using $\eta_t=1-a_t$ proves~\eqref{eq:exact}. Substituting $a_t=1-\eta_t$ into~\eqref{eq:equal-map} gives~\eqref{eq:exact-error}.

Here $e=2$. For $0<\eta<2/3$,
\[
2(1-\eta^2)-(2-2\eta+\eta^2)=\eta(2-3\eta)>0.
\]
Both denominators are positive. Thus the actual deviation in~\eqref{eq:exact-error} exceeds $\eta^2/[2(1-\eta^2)]$. Since $\eta=1-a$, this is exactly the asserted interval.
\end{proof}

\begin{proof}[Completion of the proof of Theorem~\ref{thm:examples}]
Let $a_0\to1$ in Proposition~\ref{prop:exact}. This gives violations of~\eqref{eq:old} arbitrarily close to the balanced fixed point in each dimension $4h$. Taking $1-a_0<\gamma(2)$ places these examples within Theorem~\ref{thm:main}, and~\eqref{eq:exact-error} attains its coefficient $1/e=1/2$.

In every dimension $n\geq3$, choose one family from Corollary~\ref{cor:dimensions}. Its asymptotic coefficient $K=2AB/L^2$ is strictly positive. For any real $\nu>2$,
\[
\frac{\eta_{t+1}}{\eta_t^\nu}
=\frac{\eta_{t+1}}{\eta_t^2}\,\eta_t^{2-\nu}
\longrightarrow+\infty.
\]
Hence no fixed finite constant can bound $\eta_{t+1}$ by that constant times $\eta_t^\nu$ for all sufficiently large $t$ along these trajectories. The initial parameter can be chosen arbitrarily close to $1$, so the same conclusion holds for examples initially inside the local neighbourhood of Theorem~\ref{thm:main}. This proves attainment of quadratic order.
\end{proof}

\subsection*{Sharpness and dimensional scope}
The invariant families establish that quadratic order is attained in every dimension $n\geq3$. In dimensions divisible by four, the equal-group family attains the coefficient $1/e=1/2$ for balanced partitions. Thus any asymptotic quadratic upper bound valid for all trajectories near such a fixed point must have coefficient at least $1/2$. These are sharpness statements for the convergence order and asymptotic coefficient; optimality of the full finite-error bound~\eqref{eq:new} and of the sufficient threshold~\eqref{eq:gamma} is a separate question.

The construction proves failure of Kruskal's estimate in every dimension $n\geq4$. The stronger conclusion of failure arbitrarily close to the fixed point is established here in dimensions divisible by four. In dimension three, the family proves attainment of quadratic order without furnishing a counterexample to the original estimate. Dimension two is exceptional: the two centered rows of a nondegenerate $2\times2$ correlation matrix are nonzero negatives of one another, so one iteration gives the sole nondegenerate sign-pattern fixed point.

Global convergence of every admissible trajectory in dimension three is established in~\cite{AH}. The present results complement that theorem with local quantitative bounds in arbitrary dimension, counterexamples to Kruskal's estimate for every $n\geq4$, and invariant families attaining the stated convergence rates. Global convergence alone does not decide the validity of a particular one-step estimate.

\end{document}